\newcommand{\de}{\mathrm d }
\newcommand{\R}{\mathbb R}
\newcommand{\E}{\mathbb E}
\newcommand{\eps}{\varepsilon}
\newcommand{\uMAP}{{u_{\textsc{map}}}}
\newcommand{\arginf}{\operatorname{arginf}}
\renewcommand{\H}{\mathrm H}
\newtheorem{lemma}{Lemma}
\newtheorem{proposition}{Proposition}
\newtheorem{corollary}{Corollary}
\newtheorem{claim}{Claim}
\theoremstyle{remark}
\newtheorem{rem}{Remark}
\title{Laplace's method in Bayesian inverse problems with Gaussian priors}
\author{Philipp Wacker\\Department of Mathematics\\Friedrich-Alexander-Universität Erlangen-Nürnberg (FAU)
}
\begin{document}
\maketitle
\begin{abstract}
In a Bayesian inverse problem setting, the solution consists of a posterior measure obtained by combining prior belief (where we restrict ourselves to Gaussian priors), information about the forward operator, and noisy observational data. This posterior measure is most often given in terms of a density with respect to a reference measure in a high-dimensional (or infinite-dimensional) Banach space. Although Monte Carlo sampling methods provide a way of querying the posterior, the necessity of evaluating the forward operator many times (which will often be a costly PDE solver) prohibits this in practice. For this reason, many practitioners choose a suitable Gaussian approximation of the posterior measure, by a procedure called Laplace's method. Once generated, this Gaussian measure is a easy to sample from and properties like moments are immediately acquired. This paper derives Laplace's approximation of the posterior measure attributed to the inverse problem explicitly by a second-order approximation of the data-misfit functional, specifically and rigorously in the infinite-dimensional setting. By use of a reverse Cauchy-Schwarz inequality we are able to explicitly bound the Hellinger distance between the posterior and its Laplace approximation.  
\end{abstract}
\section{Introduction}
We consider a inverse problem 
\begin{equation}\label{eq:IP}
y = G(u) + \eta
\end{equation}
where $G:X\to Y$ is a (possibly) nonlinear mapping between Hilbert spaces $X, Y$ and $\eta\sim N(0, \Gamma)$ is additive noise. The challenge consists of inferring the value of $u$ from the noisy (and usually lower-dimensional) observation $y$. This is an ill-defined problem in general, so some sort of regularization is needed.

In a Bayesian approach (see \cite{stuart2010inverse}) under Gaussian assumptions, we assume that $u\sim \mu_0 = N(0,C_0)$, i.e. we have a Gaussian prior on the variable $u$. For simplicity we assume that the mean is $0$, but this assumption can be dropped with slight modifications. The prior acts as a regularization and makes the inverse problem well-defined: Standard theory yields the posterior measure $\mu$ on $u$ given an observation $y$ under mild assumptions on the forward operator $G$:
\begin{equation}
\frac{\de \mu}{\de \mu_0}(u) = \frac{\exp(-\Phi(u))}{\int \exp(-\Phi(u)) \de\mu_0(u)},
\end{equation}
where $\Phi(u) = \frac{\|y-G(u)\|_{\Gamma}^2}{2}$.
Especially in higher dimensions, the posterior is in practice often approximated by a suitable Gaussian, in order to make computation of moments (or frequentist confidence sets) feasible. For a beautiful example how this is done in practice, see \cite{alexanderian2016fast} in the setting of optimal experimental design of an infinite-dimensional inverse problem, or in a finite-dimensional context in \cite{long2013fast}. This procedure is called Laplace's method and is the focus of this work. We define the functional $I(u) = \Phi(u) + \frac{\|u\|_{C_0}^2}{2}$ which can also be thought of as a (Tikhonov-)regularized cost functional in the classical sense. The \textit{maximum a posteriori point} is
\begin{equation}\label{eq:defMAP}
\uMAP := \arginf_u I(u)
\end{equation}
and the Laplace approximation is defined as
\begin{equation}
\nu = N(\uMAP, HI(\uMAP)^{-1})
\end{equation}
This means that the Laplace approximation is the Gaussian measure centered at the MAP point, with covariance operator matching the ``local'' covariance structure of the posterior measure. In finite dimensions, its density is exactly the normalized exponential of the measure's negative lognormal local quadratic approximation. For a good explanation (and as a general recommendation for a truly enjoyable book) of Laplace's method in finite dimensions, see \cite{mackay2003information}. The paper \cite{boone2005assessment} gives a numerical analysis of the Laplace approximation as well as the so-called Bayesian information criterion; \cite{friston2007variational} and the references therein give an overview over the use of Laplace's method in the Machine Learning and imaging community. A treatise about approximation of measures by Gaussian measures can be found in \cite{pinski2015kullback}, although they employ the Kullback-Leibler divergence (or relative entropy) as a notion of distance between measures. \cite{iglesias2013evaluation} is an extensive study of various Gaussian approximation methods (including Laplace's method) in the context of reservoir modelling. Standard results about Laplace's method are recorded in \cite{wong2001asymptotic} and \cite{breitung2006asymptotic}, with newer results on Laplace's method in \cite{kolokoltsov2015laplace}. Gaussian approximations in a different context have been treated in \cite{archambeau2007gaussian}, \cite{sanz2016gaussian} in the case of diffusion processes and in \cite{lu2016gaussian} in the context of molecular dynamics. \cite{stuart2016posterior} presents an approach running ``anti-parallely'' to Laplace's method: Instead of approximating the posterior measure directly, they approximate the forward operator or the negative log-likelihood. They bound the Hellinger distance between the true posterior and the resulting approximation, which is also the method of attack in this paper. \cite{chen2015geometric} discusses possible algorithms for implementing Laplace's method and similar approximation procedures in Bayesian inference problems. The MAP point is here defined as \eqref{eq:defMAP}, i.e. a minimization point of the functional $I$. See \cite{helin2015maximum} for a characteristation of the MAP point in a broader setting, by dropping the Gaussianity assumption (in particular, they derive an expression for the MAP point in a linear inverse problem with Besov priors). Original accounts of probabilistic methods of tackling inverse problems can be found in e.g. \cite{franklin1970well} for a random processes view in a Hilbert space setting, \cite{fitzpatrick1991bayesian} for -- among other things -- an explanation of how classical regularization can be viewed as application of a Bayesian prior. The difficulty of constructing well-defined linear estimators is addressed in \cite{mandelbaum1984linear} and \cite{luschgy1996linear}. In \cite{neubauer2008convergence}, a convergence result of the Ky-Fan metric (which is another metric between measures) between posterior measure and the minimum-norm-least-squares point (which is related to the maximum likelihood point) is proven. Although inverse problems in practice will always be finite-dimensional, it has proven worthwhile to study inverse problems from an infinite-dimensional viewpoint. This stems from the fact that although many algorithms work nicely for a given finite data set but break in the limit of finer and finer spatial or temporal resolution. One notable example is total variation regularisation, which is found to be edge-preserving, although it loses this property in the infinite-dimensional limit, see \cite{lassas2004can}. Another intuitive point is that algorithms that work in the infinite-dimensional setting should be complexity-invariant to a refinement of spatial resolution (naive algorithms often have high polynomial complexity with respect to the data). For a discussion of this fact see \cite{cotter2013mcmc}. For more details on how to construct well-defined infinite-dimensional inverse problems, see \cite{lassas2009discretization} and \cite{lasanen2002discretizations}.

It can be shown that the Laplace approximation $\nu$ coincides with the posterior measure $\mu$ if the forward operator $G$ is linear and the prior measure was Gaussian in the first place. Heuristically, the approximation is bad when the posterior measure is multimodal or has different tail properties than a Gaussian.

We are interested in deriving concrete error bounds for the approximation quality $\mu \approx \nu$. The Hellinger distance between probability measures lends itself to this cause. Given two measures $\mu,\nu$ which are absolutely continuous w.r.t. another measure $\mu_0$, the Hellinger distance (which is independent of the choice of $\mu_0$) between $\mu$ and $\nu$ is
 \begin{equation}
 (d_H(\mu,\nu))^2 = \frac{1}{2}\cdot {\int\left(\sqrt\frac{\de \mu}{\de\mu_0}- \sqrt\frac{\de \nu}{\de\mu_0}\right)^2\de \mu_0} = 1 - \int\sqrt\frac{\de \mu}{\de\mu_0}\sqrt\frac{\de \nu}{\de\mu_0}\de\mu_0
 \end{equation}

 The main conclusion of this paper is recorded informally in the following claims; they are correctly stated and proven later.
 \begin{claim}\label{claim:main1}
 While the posterior measure is given via its density w.r.t. the prior by
 \begin{equation*}
\frac{\de \mu}{\de \mu_0}(u) = \frac{\exp(-\Phi(u))}{\int \exp(-\Phi(u)) \de\mu_0(u)},
\end{equation*}
Laplace's method yields a Gaussian approximation of the form $\nu = N(\uMAP, HI(\uMAP)^{-1})$ and its density w.r.t. the prior is
\begin{equation}\label{eq:expressionnu}
\frac{\de \nu}{\de \mu_0}(u) = \frac{\exp(-T\Phi(u))}{\int \exp(-T\Phi(u)) \de\mu_0(u)}.
\end{equation}
Here, $T\Phi(u) = \Phi(\uMAP) + D\Phi(\uMAP)(u-\uMAP) + \frac{1}{2}H\Phi(\uMAP)[u-\uMAP, u-\uMAP]$, the second order Taylor approximation of the data-misfit functional $\Phi$ at $\uMAP$. 
 \end{claim}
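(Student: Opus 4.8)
\bigskip
\noindent\textbf{Sketch of proof of Claim~\ref{claim:main1}.}
Write $m:=\uMAP$. Since $I(u)=\Phi(u)+\tfrac12\|u\|_{C_0}^2=\Phi(u)+\tfrac12\langle C_0^{-1}u,u\rangle_X$, the Hessian of the Tikhonov term is the unbounded operator $C_0^{-1}$, so that $HI(m)=H\Phi(m)+C_0^{-1}$ and hence $\nu=N\!\big(m,(H\Phi(m)+C_0^{-1})^{-1}\big)$. The right-hand side of \eqref{eq:expressionnu} is, up to a normalising constant, the exponential of the quadratic functional $-T\Phi$ weighted against the Gaussian prior $\mu_0$. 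The plan is to recognise this recipe as defining a Gaussian measure, to compute that its mean is $m$ and its covariance operator is $HI(m)^{-1}$ (so that it equals $\nu$), and --- in the rigorous version --- to establish \eqref{eq:expressionnu} directly as an identity of Radon--Nikodym derivatives with respect to $\mu_0$.

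The computation driving everything is the familiar ``completing the square''. Putting $v:=u-m$ and using that the quadratic $T\Phi$ coincides with its own second-order Taylor polynomial at $m$, one has
\[
T\Phi(u)+\tfrac12\|u\|_{C_0}^2=\Big(\Phi(m)+\tfrac12\|m\|_{C_0}^2\Big)+\Big(D\Phi(m)v+\langle m,v\rangle_{C_0}\Big)+\tfrac12\big(H\Phi(m)+C_0^{-1}\big)[v,v].
\]
The middle bracket is precisely $DI(m)v$, which vanishes because $m$ minimises $I$; this stationarity identity $D\Phi(m)+C_0^{-1}m=0$ --- read in the appropriate weak sense, and in particular forcing $m$ into the Cameron--Martin space of $\mu_0$ --- is exactly what kills the term linear in $v$. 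Thus $T\Phi(u)+\tfrac12\|u\|_{C_0}^2=I(m)+\tfrac12 HI(m)[u-m,u-m]$, and since the constant $I(m)$ is absorbed into the normalising integral, the right-hand side of \eqref{eq:expressionnu} is, formally in terms of a Lebesgue density, proportional to $\exp\!\big(-\tfrac12 HI(m)[u-m,u-m]\big)$, i.e.\ the density of $N\!\big(m,HI(m)^{-1}\big)=\nu$.

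To make this rigorous in the infinite-dimensional setting, where there is no Lebesgue reference measure and ``completing the square'' has no literal meaning, I would verify \eqref{eq:expressionnu} directly at the level of Radon--Nikodym derivatives. Under the standing hypotheses on $G$ --- which should guarantee that $C_0^{1/2}H\Phi(m)C_0^{1/2}$ is trace class (so that, in particular, the determinant appearing below is well defined) and that $HI(m)=C_0^{-1}+H\Phi(m)$ is boundedly invertible --- the Gaussians $\nu=N\!\big(m,HI(m)^{-1}\big)$ and $\mu_0=N(0,C_0)$ are equivalent by the Feldman--Hájek theorem, and $\frac{\de\nu}{\de\mu_0}$ is explicitly available: it factors, via the Cameron--Martin theorem, into a mean-shift factor $\exp\!\big(\langle HI(m)m,u\rangle_X-\tfrac12\langle HI(m)m,m\rangle_X\big)$ and a covariance-change factor proportional to $\exp\!\big(-\tfrac12\langle H\Phi(m)u,u\rangle_X\big)$. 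Collecting the $u$-dependent factors and once more substituting $C_0^{-1}m=-D\Phi(m)$ shows that $\log\frac{\de\nu}{\de\mu_0}(u)$ agrees with $-T\Phi(u)$ up to an additive constant; since $\frac{\de\nu}{\de\mu_0}$ and $\exp(-T\Phi)/\int\exp(-T\Phi)\,\de\mu_0$ both integrate to $1$ against $\mu_0$, that constant is forced and \eqref{eq:expressionnu} follows.

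I expect essentially all of the genuine work to sit in this last, infinite-dimensional bookkeeping rather than in the algebra: one has to pin down the spaces on which $D\Phi(m)$ and $H\Phi(m)$ act, confirm that $m\in\operatorname{Im}(C_0^{1/2})$ so that $\|m\|_{C_0}$ and the stationarity equation make sense, and check that the hypotheses on $G$ genuinely deliver the trace-class property of $C_0^{1/2}H\Phi(m)C_0^{1/2}$ underlying both $\nu\ll\mu_0$ and the finiteness of the Feldman--Hájek determinant. Granting those analytic points, the substance of the claim is nothing more than the cancellation of the linear term forced by stationarity of $I$ at $\uMAP$ --- exactly the classical finite-dimensional Laplace computation recalled in \cite{mackay2003information}, now carried out against the Gaussian prior $\mu_0$ instead of Lebesgue measure.
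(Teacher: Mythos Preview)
Your proposal is correct, and the algebraic heart of it --- the identity
\[
T\Phi(u)+\tfrac12\|u\|_{C_0}^2=I(\uMAP)+\tfrac12\,\H I(\uMAP)[u-\uMAP,u-\uMAP],
\]
driven by the stationarity $DI(\uMAP)=0$ --- is exactly what the paper exploits as well (this is the ``$R(u)$'' calculation in the one-dimensional section). Where you and the paper diverge is in how the infinite-dimensional step is made rigorous.

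The paper does \emph{not} invoke Feldman--H\'ajek or Cameron--Martin to write down $\tfrac{\de\nu}{\de\mu_0}$ and then match it to $\exp(-T\Phi)$. Instead it starts from the other side: it defines $\tilde\nu$ by $\tfrac{\de\tilde\nu}{\de\mu_0}\propto\exp(-T\Phi)$ and computes its characteristic function $\hat{\tilde\nu}(\lambda)=\int e^{i\langle\lambda,u\rangle-T\Phi(u)}\,\de\mu_0(u)\big/\int e^{-T\Phi(u)}\,\de\mu_0(u)$ directly, using a Gaussian-integral identity of Da~Prato--Zabczyk (formula \eqref{eq:DPZ} and its complex extension in Lemma~1). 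The completing-the-square step is done inside that integral, and the output is recognised as the characteristic function of $N(\uMAP,\H I(\uMAP)^{-1})$. This buys the paper something concrete: absolute continuity $\nu\ll\mu_0$ comes out as a consequence rather than an input, and one never has to interpret the quadratic form $\langle H\Phi(\uMAP)u,u\rangle$ or the Fredholm determinant in the Feldman--H\'ajek density formula, which (as you rightly flag) is where the analytic care in your route would have to go. Conversely, your approach is more transparently ``the finite-dimensional computation done against $\mu_0$'', and it makes the role of the trace-class hypothesis on $C_0^{1/2}H\Phi(\uMAP)C_0^{1/2}$ explicit, whereas in the paper that hypothesis is hidden inside the applicability of the Gaussian-integral formula.
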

  \begin{claim} \label{lem:main2}
If there is $K\in(0,1)$ such that
\begin{equation*}
\left\|\exp\left(-\frac{\Phi(u)}{2}\right) - \exp\left(-\frac{T\Phi(u)}{2}\right)\right\|_{L^2(X, \mu_0)} \leq K\cdot \frac{\exp(-I(\uMAP))}{\sqrt[4]{\det(C_0^{1/2}\cdot H\Phi(\uMAP)\cdot C_0^{1/2} + \operatorname{Id})}},
\end{equation*}
then the Hellinger distance between the posterior and its Laplace's approximation can be bounded:
\begin{equation*}
d_H(\mu,\nu) \leq \frac{K}{\sqrt{1 + (1-K)^2}}
\end{equation*}
 \end{claim}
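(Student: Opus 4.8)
The plan is to work directly with the two characterizations of $\mu$ and $\nu$ as densities with respect to $\mu_0$, write out $d_H(\mu,\nu)^2$ in terms of the (unnormalized) densities $f(u) = \exp(-\Phi(u)/2)$ and $g(u) = \exp(-T\Phi(u)/2)$, and control everything by the $L^2(X,\mu_0)$-distance $\|f-g\|$ that appears in the hypothesis. First I would record the two normalization constants: $Z_\mu = \int \exp(-\Phi)\,\de\mu_0 = \|f\|_{L^2(\mu_0)}^2$ and $Z_\nu = \int \exp(-T\Phi)\,\de\mu_0 = \|g\|_{L^2(\mu_0)}^2$. The key auxiliary fact I expect to need — and which should be established just before this claim, since the right-hand side of the hypothesis features exactly $\exp(-I(\uMAP))/\sqrt[4]{\det(C_0^{1/2} H\Phi(\uMAP) C_0^{1/2} + \mathrm{Id})}$ — is a closed-form (Gaussian) evaluation of $Z_\nu$, giving $\sqrt{Z_\nu} = \|g\|_{L^2(\mu_0)} = \exp(-I(\uMAP))/\sqrt[4]{\det(C_0^{1/2} H\Phi(\uMAP) C_0^{1/2} + \mathrm{Id})}$, so that the hypothesis reads simply $\|f-g\|_{L^2(\mu_0)} \le K \cdot \|g\|_{L^2(\mu_0)}$.

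Next I would expand
\[
d_H(\mu,\nu)^2 = 1 - \int \sqrt{\tfrac{\de\mu}{\de\mu_0}}\sqrt{\tfrac{\de\nu}{\de\mu_0}}\,\de\mu_0 = 1 - \frac{1}{\sqrt{Z_\mu Z_\nu}}\int f\,g\,\de\mu_0 = 1 - \frac{\langle f, g\rangle}{\|f\|\,\|g\|},
\]
where $\langle\cdot,\cdot\rangle$ and $\|\cdot\|$ are the $L^2(X,\mu_0)$ inner product and norm. So the whole problem reduces to bounding $1 - \langle f,g\rangle/(\|f\|\|g\|)$ from above, given $\|f - g\| \le K\|g\|$. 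This is where the "reverse Cauchy–Schwarz" mentioned in the abstract enters: from $\|f-g\|\le K\|g\|$ we get $\|f\| \le (1+K)\|g\|$ and $\|f\| \ge (1-K)\|g\|$, and also $\langle f,g\rangle = \tfrac12(\|f\|^2 + \|g\|^2 - \|f-g\|^2) \ge \tfrac12(\|f\|^2 + \|g\|^2 - K^2\|g\|^2)$. Dividing by $\|f\|\|g\|$ and writing $t = \|f\|/\|g\| \in [1-K, 1+K]$ yields
\[
\frac{\langle f,g\rangle}{\|f\|\,\|g\|} \ge \frac{t^2 + 1 - K^2}{2t},
\]
so $d_H(\mu,\nu)^2 \le 1 - \min_{t\in[1-K,1+K]} \frac{t^2 + 1 - K^2}{2t}$. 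The function $t\mapsto (t^2 + 1 - K^2)/(2t) = \tfrac12(t + (1-K^2)/t)$ is convex in $t>0$ with minimum at $t_* = \sqrt{1-K^2}\in[1-K,1+K]$, where its value is $\sqrt{1-K^2}$. Hence $d_H(\mu,\nu)^2 \le 1 - \sqrt{1-K^2}$, and a short algebraic manipulation (rationalizing $1 - \sqrt{1-K^2} = K^2/(1+\sqrt{1-K^2})$ and comparing with $K^2/(1+(1-K)^2)$, noting $\sqrt{1-K^2}\ge 1-K \ge (1-K)^2$ since $1-K\le 1$) gives the stated bound $d_H(\mu,\nu)\le K/\sqrt{1+(1-K)^2}$.

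The main obstacle is not the inequality chain above — that is elementary once set up — but establishing the Gaussian-integral identity for $Z_\nu = \|g\|_{L^2(\mu_0)}^2$ cleanly in the infinite-dimensional setting, i.e. justifying that $\int_X \exp(-T\Phi(u))\,\de\mu_0(u)$ equals $\exp(-2I(\uMAP))/\sqrt{\det(C_0^{1/2}H\Phi(\uMAP)C_0^{1/2} + \mathrm{Id})}$, where the determinant is a Fredholm determinant and $H\Phi(\uMAP)$ must be trace-class (or at least such that $C_0^{1/2}H\Phi(\uMAP)C_0^{1/2}$ is). This requires completing the square for the quadratic form $T\Phi$ against the Gaussian $\mu_0$, identifying the optimizer with $\uMAP$ via the first-order condition $D\Phi(\uMAP) + C_0^{-1}\uMAP = 0$ characterizing the MAP point, and invoking the Cameron–Martin / Feldman–Hájek machinery to make the change of measure rigorous; I would expect this to have been carried out in the material establishing Claim 1 and the explicit form \eqref{eq:expressionnu}, so here I would simply cite it. A secondary (but routine) point is verifying that $\mu$ and $\nu$ are genuinely mutually absolutely continuous with respect to $\mu_0$ so that the Hellinger formula applies, and that $Z_\mu, Z_\nu \in (0,\infty)$, which follows from $\Phi \ge 0$ and the finiteness of $Z_\nu$ just discussed together with $\|f - g\| < \infty$.
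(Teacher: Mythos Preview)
Your argument is correct and in fact yields a sharper intermediate estimate than the paper's. The paper does not optimize over $t=\|f\|/\|g\|$ directly; instead it passes through two auxiliary lemmas: first a ``reverse Young'' inequality (if $\|f-g\|^2\le D(\|f\|^2+\|g\|^2)$ then $\langle f,g\rangle\ge(1-D)\|f\|\,\|g\|$), and then a computation showing that $\|f-g\|\le K\|f\|$ forces $D=1-\tfrac{2(1-K)}{1+(1-K)^2}$, which gives $d_H^2\le K^2/(1+(1-K)^2)$ immediately. Your direct minimization of $(t^2+1-K^2)/(2t)$ over $t\in[1-K,1+K]$ produces the strictly smaller bound $d_H^2\le 1-\sqrt{1-K^2}$, which you then relax to match the stated conclusion; so your route is both more elementary and shows that the claimed constant is not optimal.

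One small slip worth flagging: by the normalization identity \eqref{eq:normalization} one has $Z_\nu=\int e^{-T\Phi}\,\de\mu_0=e^{-I(\uMAP)}/\sqrt{\det(\cdot)}$, hence $\|g\|_{L^2(\mu_0)}=e^{-I(\uMAP)/2}/\sqrt[4]{\det(\cdot)}$, not $e^{-I(\uMAP)}/\sqrt[4]{\det(\cdot)}$. The exponent in the hypothesis of the claim as written appears to be off by a factor of two, and your back-computed value $Z_\nu=e^{-2I(\uMAP)}/\sqrt{\det(\cdot)}$ inherits that typo. This does not affect your proof, since all you actually use is that the right-hand side of the hypothesis equals $K\|g\|_{L^2(\mu_0)}$.
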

 While approximation results for Laplace's method have been achieved before (e.g. in the references made above), it is to the best of our knowledge that the claims as stated have not been formulated in an Inverse problem setting where Laplace's method is often used.
 
 The paper is organized as follows: First the more intuitive one-dimensional case is presented and representation \eqref{eq:expressionnu} is derived. The approach taken can not be generalized to the infinite dimensional case, as we use densities with respect to a Lebesgue measure. Then the infinite-dimensional equivalent is proven, where we show the equality \eqref{eq:expressionnu} directly by means of characteristic functions. The following section shows that the problem of bounding the Hellinger distance can be reduced to a reverse Cauchy-Schwarz inequality. After recording a few elementary results about reverse CS inequalities, we immediately obtain claim \ref{lem:main2} and also a more practical (but less tight) version of it.
 
 \begin{rem}
 A caveat regarding practicality: Although proposition \ref{prop:main2} and corollary \ref{cor:useful} yield expressions that seem to offer readily-checked conditions for explicit bounds on the Hellinger distance, a non-artificially constructed example where claim 2 easily yields a non-trivial bound on $d_H$ seems hard do obtain and the applicability of the second claim should thus not be overstated. This is explained in more detail in the section about application. Rather, the main point of this paper is the derivation and proof of characterization of expression \eqref{eq:expressionnu}.
 \end{rem}
\section{The Laplace approximation in one dimension}
We recall 
\begin{equation*}
\frac{\de \mu}{\de \mu_0}(u) = \frac{\exp(-\Phi(u))}{\int \exp(-\Phi(u)) \de\mu_0(u)},
\end{equation*}
and with for the Lebesgue measure $\lambda$ in one dimension, the Laplace approximation of $\mu$, which is a Gaussian centered at the maximum a posteriori point, with variance equal to the inverse  of the second derivative of $I$ at this point. In finite dimensions, any measure is most easily recorded by its density w.r.t. $\lambda$.
\begin{equation}
\frac{\de \nu}{\de \lambda}(u) = \sqrt\frac{I''(\uMAP)}{2\pi} \exp\left( -\frac{I''(\uMAP)}{2}\cdot (u-\uMAP)^2\right),
\end{equation}
hence
\begin{equation}
\frac{\de\nu}{\de\mu_0}(u) = \frac{\frac{\de\nu}{\de\lambda}(u)}{\frac{\de\mu_0}{\de\lambda}(u)} = \sqrt{I''(\uMAP)}\cdot\sigma\cdot  \exp\left( -\frac{I''(\uMAP)}{2}\cdot (u-\uMAP)^2 + \frac{u^2}{2\sigma^2}\right).
\end{equation}
As we mentioned, second-order Taylor approximations will play a leading role, hence we define 
\begin{equation}
R(u) = I(u) - I(\uMAP) - \frac{I''(\uMAP)}{2}(u-\uMAP)^2,
\end{equation}
the error term of the second order Taylor approximation of $I$ in $\uMAP$. Note that the first-order term vanishes because of $\uMAP$ being a minimum of $I$, which we assume to be $C^2$. Interestingly, $R(u)$ is also an error term for the second order approximation of $\Phi$, albeit in a slightly different way:
\begin{align*}
R(u) &= \Phi(u) + \frac{u^2}{2\sigma^2} - \Phi(\uMAP) - \frac{\uMAP^2}{2\sigma^2}-\frac{\Phi''(\uMAP)}{2}(u-\uMAP)^2 - \frac{1}{\sigma^2}(u-\uMAP)^2\\
&= \frac{\uMAP\cdot (u-\uMAP)}{\sigma^2} + \Phi'(\uMAP)(u-\uMAP) \\
&+\left[ \Phi(u) - \Phi(\uMAP) - \Phi'(\uMAP)(u-\uMAP) - \frac{\Phi''(\uMAP)}{2}(u-\uMAP)^2\right]\\
&= \Phi(u) - T^{(2)}_\uMAP\Phi(u).
\end{align*}
Note that the terms in the second line vanish because of $0 = I'(\uMAP) = \Phi'(\uMAP) + \frac{\uMAP}{\sigma^2}$ and the second term is exactly the error between $\Phi(u)$ and its second order Taylor polynom developed in $\uMAP$. It holds that $I'(\uMAP) = 0$ but in general $\Phi'(\uMAP)\neq 0$.
With this definition of $R$, and especially
\[ -\frac{I''(\uMAP)}{2}\cdot (u-\uMAP)^2 + \frac{u^2}{2\sigma^2} = -\Phi(u) + I(\uMAP) + R(u),\]
we obtain for the density of $\nu$ w.r.t. $\mu_0$:
\begin{equation*}
\frac{\de\nu}{\de\mu_0}(u) = \sqrt{I''(\uMAP)}\cdot\sigma\cdot \exp(I(\uMAP)) \cdot \exp(-\Phi(u) + R(u)).
\end{equation*}
An easy calculation shows
\begin{equation}\label{eq:calcTPhi}
\begin{split}
\frac{\exp(-I(\uMAP))}{\sigma \sqrt{I''(\uMAP)}} &= \int \exp \left(\frac{u^2}{2\sigma^2-I(\uMAP)-\frac{I''(\uMAP)}{2}(u-\uMAP)^2}\right)\mu_0(\de u)\\
&= \int \exp\left(-\Phi(u)+R(u)\right)\mu_0(\de u) = \int \exp(-T^{(2)}_\uMAP\Phi(u))\de\mu_0(u).
\end{split}
\end{equation}
This also follows immediately from the normalization of $\de\nu/\de\mu_0$ and thus
\begin{equation}
\frac{\de\nu}{\de\mu_0}(u) = \frac{\exp(-\Phi(u)+R(u))}{\int \exp(-\Phi(u)+R(u))\mu_0(\de u)} = \frac{\exp(-T\Phi(u))}{\int \exp(-T\Phi(u))\mu_0(\de u)}.
\end{equation}

\section{The Laplace approximation in general Hilbert spaces}
In this section, we will show claim \ref{claim:main1} in the setting of a general (possibly infinite-dimensional) Hilbert space. In order to do this, we need a slight adaptation of a useful Gaussian integral calculation.

In Proposition 1.2.8 in \cite{da2002second}, the authors prove the following: Let $\mu_0 = N(0, Q)$ be a Gaussian measure on a real Hilbert space $H$. Assume that $M$ is a symmetric operator such that $\langle Q^{1/2}MQ^{1/2} u, u\rangle < \langle u,u\rangle$ for all $0\neq u\in H$. Then for $b\in H$
\begin{equation}\label{eq:DPZ}
\begin{split}
\int_H \exp\left(\frac{1}{2}\langle M u,u\rangle + \langle b, u\rangle\right) \de\mu_0(u) = \frac{\exp\left(\frac{1}{2}\left|(1-Q^{1/2}MQ^{1/2})^{-1/2} \cdot Q^{1/2}b\right|^2 \right)}{\sqrt{\det (1-Q^{1/2}MQ^{1/2})}}.
\end{split}
\end{equation}
We will need a generalization of this formula, which follows from analytical continuation of the (real) Hilbert space's inner product to its complex extension. Recall that this continuation will not be positively definite anymore: $\langle \lambda_1 a + b, \lambda_2 c + d\rangle = \lambda_1 \lambda_2 \langle a, c\rangle + \lambda_2 \langle b, c\rangle + \lambda_1\langle a, d\rangle + \langle b,d\rangle$, without any complex conjugations on any $\lambda_i$. The following lemma is stated without proof as it follows immediately from the bilinearity of the analytical continuation stated above.

\begin{lemma}
Let $\mu_0 = N(0, Q)$ be a Gaussian measure on a real Hilbert space $H$. Assume that $M$ is a symmetric operator such that $\langle Q^{1/2}MQ^{1/2} u, u\rangle < \langle u,u\rangle$ for all $0\neq u\in H$. Then, with $L:=Q^{1/2}(1-Q^{1/2}MQ^{1/2})^{-1}Q^{1/2}$ and for $b_1, b_2\in H$
\begin{equation}\label{eq:DPZ_adapted}
\begin{split}
\int_H &\exp\left(\frac{1}{2}\langle M u,u\rangle + \langle b_1+ib_2, u\rangle\right) \de\mu_0(u) \\
&= \frac{\exp\left(\frac{1}{2}\langle Lb_1,b_2\rangle + i\langle Lb_1, b_2\rangle - \frac{1}{2}\langle Lb_2, b_2\rangle  \right)}{\sqrt{\det (1-Q^{1/2}MQ^{1/2})}}.
\end{split}
\end{equation}
\end{lemma}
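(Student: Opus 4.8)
The plan is to deduce \eqref{eq:DPZ_adapted} from the real-argument formula \eqref{eq:DPZ} by analytic continuation in a complex parameter, which is the rigorous version of the ``bilinearity without conjugation'' heuristic. Fix $b_1, b_2 \in H$ and, for $z \in \C$, set
\begin{equation*}
F(z) := \int_H \exp\left(\tfrac12\langle Mu, u\rangle + \langle b_1, u\rangle + z\langle b_2, u\rangle\right)\de\mu_0(u).
\end{equation*}
For real $z = t$, the exponent equals $\tfrac12\langle Mu,u\rangle + \langle b_1 + t b_2, u\rangle$ with $b_1 + t b_2 \in H$, so \eqref{eq:DPZ} applies; using that $(1 - Q^{1/2}MQ^{1/2})^{-1/2}$ and $Q^{1/2}$ are self-adjoint, so that $\langle (1 - Q^{1/2}MQ^{1/2})^{-1/2}Q^{1/2}v,\, (1 - Q^{1/2}MQ^{1/2})^{-1/2}Q^{1/2}w\rangle = \langle Lv, w\rangle$, we obtain
\begin{equation*}
F(t) = \frac{1}{\sqrt{\det(1 - Q^{1/2}MQ^{1/2})}}\exp\left(\tfrac12\langle L b_1, b_1\rangle + t\langle L b_1, b_2\rangle + \tfrac{t^2}{2}\langle L b_2, b_2\rangle\right) =: G(t).
\end{equation*}
The right-hand side is the restriction to $\R$ of a manifestly entire function $G(z)$.

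Next I would show that $F$ is itself entire. The integrand is holomorphic in $z$ for each fixed $u$, and on any disk $\{|z| \le R\}$ its modulus is bounded, uniformly in $z$, by
\begin{equation*}
\exp\left(\tfrac12\langle Mu,u\rangle + \langle b_1, u\rangle\right)\big(\exp(R\langle b_2, u\rangle) + \exp(-R\langle b_2, u\rangle)\big),
\end{equation*}
since $|\exp(z\langle b_2,u\rangle)| = \exp(\Re(z)\langle b_2,u\rangle)$ and $|\Re(z)| \le R$. Each of the two resulting summands, once the Gaussian factor is attached, is of the form integrated in \eqref{eq:DPZ} with $b = b_1 \pm R b_2 \in H$, hence $\mu_0$-integrable; so the displayed bound is a $\mu_0$-integrable dominating function on the disk. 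Continuity of $F$ follows, and Fubini's theorem together with Cauchy's integral theorem give $\oint_\gamma F(z)\,\de z = \int_H \oint_\gamma(\cdots)\,\de z\,\de\mu_0(u) = 0$ for every closed contour $\gamma$ inside the disk; by Morera's theorem, and since $R$ was arbitrary, $F$ is entire.

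Finally, $F$ and $G$ are entire and agree on $\R$, so by the identity theorem $F \equiv G$ on $\C$. Evaluating at $z = i$ sends $t\langle Lb_1,b_2\rangle$ to $i\langle Lb_1,b_2\rangle$ and $\tfrac{t^2}{2}\langle Lb_2,b_2\rangle$ to $-\tfrac12\langle Lb_2,b_2\rangle$, which yields \eqref{eq:DPZ_adapted} (with the first exponent term $\tfrac12\langle Lb_1,b_1\rangle$). The only step requiring genuine care is the domination estimate: one must observe that the legitimacy of both the continuity of $F$ and the interchange of $\oint_\gamma$ with $\int_H$ rests entirely on applying \eqref{eq:DPZ} to the two real shifts $b_1 \pm R b_2$. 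The computation of $G$, the identity theorem, and the substitution $z = i$ are routine, and this argument also makes transparent why the complex-bilinear (non-conjugated) extension of $\langle\cdot,\cdot\rangle$ is the right object: $G(i)$ is exactly $\det(1 - Q^{1/2}MQ^{1/2})^{-1/2}\exp\big(\tfrac12\langle L(b_1+ib_2), b_1+ib_2\rangle\big)$ evaluated with that extension.
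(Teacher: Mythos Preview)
Your proof is correct and is precisely the rigorous execution of the approach the paper indicates: the paper states the lemma without proof, remarking only that it follows from analytic continuation of the (non-conjugate) bilinear form, and you carry this out via a domination estimate (using \eqref{eq:DPZ} with the real shifts $b_1\pm Rb_2$), Morera's theorem, and the identity theorem. You also correctly observe that the first term in the exponent should read $\tfrac12\langle Lb_1,b_1\rangle$ rather than $\tfrac12\langle Lb_1,b_2\rangle$, an apparent typo in the statement.
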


Now we can prove our main result: 
\begin{proposition}\label{prop:main1}
Consider the inverse problem \ref{eq:IP} with prior $\mu_0$ and posterior $\mu$ given by \[\frac{\de\mu}{\de\mu_0}(u) = \frac{\exp(-\Phi(u))}{\int\exp(-\Phi(u))\de\mu_0}.\] 
The functional $I(u) = \Phi(u) + \frac{1}{2}\|u\|_{C_0}^2$ is assumed to be $C^2$ in a neighborhood of $\uMAP = \arginf I(u)$.
Then the Laplace approximation of $\mu$ given by
\[ \nu = N(\uMAP, \H I(\uMAP)^{-1}) \]
is equivalently defined by
\[ \frac{\de\nu}{\de\mu_0}(u) = \frac{\exp(-T\Phi(u))}{\int\exp(-T\Phi(u))\de\mu_0},\] 
where $T\Phi(u) = \Phi(\uMAP) + D\Phi(\uMAP)(u-\uMAP) + \frac{1}{2}\H\Phi(\uMAP)[u-\uMAP, u-\uMAP]$ is the second order Taylor approximation of $\Phi$ generated in $\uMAP$

\end{proposition}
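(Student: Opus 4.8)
The plan is to compute the density $\de\nu/\de\mu_0$ directly from the definition $\nu = N(\uMAP, \H I(\uMAP)^{-1})$ and show it equals the claimed expression. The essential tool is the Gaussian integral formula \eqref{eq:DPZ} and its analytically-continued version \eqref{eq:DPZ_adapted}, which will be used to compute characteristic functions. Since in infinite dimensions there is no Lebesgue reference measure, the one-dimensional argument cannot be transplanted; instead, I would identify both $\nu$ and the measure $\tilde\nu$ defined by $\de\tilde\nu/\de\mu_0 \propto \exp(-T\Phi)$ by showing they have the same characteristic function $\hat\nu(\xi) = \int \exp(i\langle \xi, u\rangle)\,\de\nu(u)$.

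First I would set up notation: write $m = \uMAP$, let $H\Phi(m)$ be the (symmetric, bounded) Hessian of $\Phi$ at $m$, so that $\H I(m) = H\Phi(m) + C_0^{-1}$ and hence $\H I(m)^{-1}$ is the covariance of $\nu$. For the Gaussian $\nu$, the characteristic function is explicit: $\hat\nu(\xi) = \exp\!\left(i\langle \xi, m\rangle - \tfrac12\langle \H I(m)^{-1}\xi, \xi\rangle\right)$. The real work is computing the characteristic function of $\tilde\nu$. Expanding $T\Phi(u) = \Phi(m) + D\Phi(m)(u-m) + \tfrac12 H\Phi(m)[u-m,u-m]$, one has $\exp(-T\Phi(u)) = \exp(-\Phi(m))\exp\!\left(-\tfrac12\langle H\Phi(m)u,u\rangle + \langle b, u\rangle + c\right)$ for suitable $b \in X$ (collecting the linear-in-$u$ terms, $b = H\Phi(m)m - D\Phi(m)^*$, viewing $D\Phi(m)$ as an element of $X$) and a constant $c$. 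Then $\int \exp(i\langle\xi,u\rangle - T\Phi(u))\,\de\mu_0(u)$ is exactly an integral of the form \eqref{eq:DPZ_adapted} with $M = -H\Phi(m)$, $b_1 = b$, $b_2 = \xi$, and the hypothesis $\langle C_0^{1/2} M C_0^{1/2}u,u\rangle < \langle u,u\rangle$ is automatic since $H\Phi(m)$ is positive semidefinite (as $m$ minimizes $I$ and $C_0^{-1}$ contributes the strict part). Dividing by the same integral at $\xi = 0$ gives $\hat{\tilde\nu}(\xi)$, and after simplifying the exponent using $L = C_0^{1/2}(\mathrm{Id} - C_0^{1/2}(-H\Phi(m))C_0^{1/2})^{-1}C_0^{1/2} = (C_0^{-1} + H\Phi(m))^{-1} = \H I(m)^{-1}$, the normalization constant $\det(\cdot)$ cancels and one is left with $\exp(i\langle L b, \xi\rangle - \tfrac12\langle L\xi,\xi\rangle)$. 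It remains to check $Lb = m$, i.e. $(C_0^{-1}+H\Phi(m))^{-1}(H\Phi(m)m - D\Phi(m)^*) = m$, which is equivalent to $D\Phi(m)^* + C_0^{-1}m = 0$ — precisely the first-order optimality condition $DI(m) = 0$. Hence $\hat{\tilde\nu} = \hat\nu$, so $\tilde\nu = \nu$.

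The main obstacle is making the operator-algebra identifications rigorous in infinite dimensions: one must justify that $L = \H I(m)^{-1}$ as bounded operators (using that $C_0$ is trace-class and $H\Phi(m)$ is bounded so the Neumann-type manipulations and the determinant in \eqref{eq:DPZ_adapted} are well-defined), and that the Riesz representative $D\Phi(m)^* \in X$ and the identity $Lb = m$ hold as genuine equalities in $X$ rather than formal ones. A secondary point requiring care is verifying the spectral hypothesis $\langle C_0^{1/2}(-H\Phi(m))C_0^{1/2}u,u\rangle < \langle u,u\rangle$ strictly for all $u \neq 0$: this follows because $\langle \H I(m)u,u\rangle \geq 0$ near the minimum forces $\langle H\Phi(m)u,u\rangle \geq -\langle C_0^{-1}u,u\rangle$, and transferring through $C_0^{1/2}$ yields $\langle C_0^{1/2}(-H\Phi(m))C_0^{1/2}v,v\rangle \leq \langle v,v\rangle$, with strictness because $\H I(m)$ is assumed invertible (being the inverse covariance of $\nu$). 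Once these points are settled the equality of characteristic functions, and hence of measures, is immediate, and the final formula $\int \exp(-T\Phi(u))\,\de\mu_0(u) = \exp(-I(m))/\sqrt{\det(C_0^{1/2}H\Phi(m)C_0^{1/2} + \mathrm{Id})}$ drops out as a byproduct of evaluating the normalizing constant — a fact that will be reused in Claim~\ref{lem:main2}.
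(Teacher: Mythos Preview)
Your proposal is correct and follows the same strategy as the paper: both identify $\tilde\nu$ with $\nu$ by computing the characteristic function $\int e^{i\langle\lambda,u\rangle - T\Phi(u)}\,\de\mu_0(u)$ via the Gaussian integral \eqref{eq:DPZ_adapted} and matching it against the known characteristic function of $N(\uMAP,\H I(\uMAP)^{-1})$. The only difference is in how the exponent is organized before invoking \eqref{eq:DPZ_adapted}. The paper first rewrites
\[
-T\Phi(u) = \tfrac12\|u\|_{C_0}^2 - I(\uMAP) - \tfrac12\,\H I(\uMAP)[u-\uMAP,u-\uMAP],
\]
an identity that already absorbs the first-order condition $DI(\uMAP)=0$, and then applies \eqref{eq:DPZ_adapted} with $M = C_0^{-1}-\H I(\uMAP)$ and $b_1 = \H I(\uMAP)\,\uMAP$, so that $L = \H I(\uMAP)^{-1}$ and $Lb_1 = \uMAP$ are immediate. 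You instead expand $T\Phi$ directly, take $M = -H\Phi(\uMAP)$, and invoke $DI(\uMAP)=0$ only at the end to verify $Lb = \uMAP$. Your route has the mild advantage that $M$ is a bounded operator, sidestepping the formally unbounded $C_0^{-1}$ appearing in the paper's choice of $M$; the paper's route makes the final algebra a line shorter. Either way the normalizing constant \eqref{eq:normalization} falls out as a byproduct, exactly as you observe. (One small slip: your first justification of the spectral hypothesis claims $H\Phi(m)$ is positive semidefinite, which need not hold; your later ``secondary point'' gives the correct argument via positivity of $\H I(\uMAP)$.)
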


\begin{proof}
This is done by comparing the Fourier transform (or characteristic function) of both representations for $\nu$. We calculate

\begin{equation}\label{eq:normalization}
\int \exp(-T\Phi(u))\de\mu_0(u) = \frac{e^{-I(\uMAP)}}{\sqrt{\det C^{1/2}\H I(\uMAP)C^{1/2}}}
\end{equation}
and
\begin{equation}\label{eq:charFnc}
\int \exp(i\langle \lambda, u\rangle -T\Phi(u))\de\mu_0(u) = \frac{e^{-I(\uMAP)}\cdot \exp\left( i\langle \uMAP, \lambda\rangle - \frac{1}{2}\cdot \H I(\uMAP)^{-1}[\lambda, \lambda]\right)}{\sqrt{\det C^{1/2}\H I(\uMAP)C^{1/2}}} 
\end{equation}
We show \eqref{eq:charFnc}, as \eqref{eq:normalization} follows from setting $\lambda = 0$. We use 
\[-T\Phi(u) = R(u) - \Phi(u) = \frac{\|u\|_C^2}{2} - I(\uMAP) - \frac{1}{2}\H I(\uMAP)[u-\uMAP, u-\uMAP].\]
and write $J = \H I(\uMAP)$ and $v=\uMAP$ for brevity. Note that for a bilinear operator $K$ we will identify $K(w, z) = \langle K w, z\rangle$. Then \begin{align*}
\int \exp(i\langle \lambda, u\rangle &-T\Phi(u))\de\mu_0(u)\\
 &= \int \exp\left(i\langle \lambda, u \rangle  + \frac{\|u\|_C^2}{2} - I(v) - \frac{1}{2}J[u-v, u-v] \right)\de\mu_0(u)\\
&=e^{-I(v)}\cdot \int \exp\left( \langle i\lambda, u\rangle + \frac{1}{2}\langle C^{-1} u,u\rangle  -\frac{1}{2}\langle J(u-v),u-v\rangle \right)\de\mu_0(u)\\
&= e^{-I(v) - \frac{1}{2}\langle Jv,v\rangle}\cdot \int \exp\left(\langle Jv + i\lambda, u\rangle + \frac{1}{2} \langle (C^{-1}-J)u,u\rangle \right)\de\mu_0(u).
\intertext{This is formula \eqref{eq:DPZ_adapted} with $M = C^{-1}-J$ and $b_1 = Jv$, $b_2=\lambda$. In this case, $1 - C^{1/2}MC^{1/2} = 1 - C^{1/2}(C^{-1}-J)C^{1/2} = C^{1/2}JC^{1/2}$ and thus $(1 - C^{1/2}MC^{1/2})^{-1/2} = J^{-1/2}C^{-1/2}$. Continuing,}
&= e^{-I(v) - \frac{1}{2}\langle Jv,v\rangle}\cdot \frac{\exp\left(\frac{1}{2}|J^{1/2}v|^2 + i \langle J^{1/2}v,J^{-1/2}\lambda \rangle- \frac{1}{2}|J^{-1/2}\lambda|^2\right)}{\sqrt{\det C^{1/2}JC^{1/2}}}\\
&= e^{-I(v)}\cdot \frac{\exp\left(i \langle v,\lambda \rangle - \frac{1}{2}\langle J^{-1}\lambda, \lambda\rangle\right)}{\sqrt{\det C^{1/2}JC^{1/2}}}
\end{align*}
This proves equation \eqref{eq:charFnc} and it follows that the characteristic function of the measure $\tilde\nu$ defined by $\frac{\de\tilde\nu}{\de\mu_0} = 1/Z\cdot \exp(-T\Phi)$ fulfills
\begin{align*}
\hat{\tilde\nu}(\lambda) &= \frac{\int \exp(i\langle \lambda, u\rangle) \de\tilde\nu(u)}{Z} = \frac{\int \exp(i\langle \lambda, u\rangle)\cdot \exp(-T\Phi(u)) \de\mu_0(u)}{\int \exp(i\langle \lambda, u\rangle) \de\mu_0(u)}\\
&= \exp\left\{i\langle \uMAP, \lambda\rangle + \frac{1}{2}\langle \H I(\uMAP)^{-1}(u),u\rangle \right\},
\end{align*}
i.e. $\tilde \nu = N(\uMAP, \H I(\uMAP)^{-1})$ as claimed.
\end{proof}
As in the one-dimensional setting, we conclude
\[\frac{\de\mu}{\de\mu_0} = \frac{\exp(-\Phi(u))}{\int \exp(-\Phi(u))\de\mu_0(u)}\]
and
\[\frac{\de\nu}{\de\mu_0} = \frac{\exp(-T\Phi(u))}{\int \exp(-T\Phi(u))\de\mu_0(u)}.\]

With these expressions, we derive a bound on the Hellinger distance between  $\mu$ and $\nu$ in the next section.
\section{Hellinger distance}

There are many notions of metrics and semi-metrics between measures, notably total variation, Hellinger, Wasserstein, Prokhorov and Kullback-Leibler. A survey of these and more probability metrics, including a detailed exposition of their relations is \cite{gibbs2002choosing}. 

We choose the Hellinger distance, mainly because of its good analytic properties, its consistency with the total variation metric and because of the following lemma which allows us to bound the difference between expectations under the different measures in question:

\begin{lemma}[part of Lemma 7.14 in \cite{dashti2013bayesian}]
Let $\mu, \mu'$ be two probability measures which are absolutely continuous w.r.t. another measure $\nu$ on a Banach space $(X,\|\cdot \|_X)$. Assume that $f: X\to E$, where $(E,\|\cdot\|)$ has second moments with respect to both $\mu$ and $\mu'$. Then
\[ \|\E^\mu f - \E^{\mu'} f\| \leq 2 \sqrt{\E^\mu \|f\|^2 + \E^{\mu'}\|f\|^2}\cdot d_H(\mu,\mu').\]
\end{lemma}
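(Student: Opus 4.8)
The plan is to derive the bound from a single Cauchy--Schwarz inequality combined with the elementary estimate $(a+b)^2 \le 2(a^2+b^2)$. Write $p := \de\mu/\de\nu$ and $p' := \de\mu'/\de\nu$ for the two densities, both nonnegative with $\int p \,\de\nu = \int p'\,\de\nu = 1$. Since $\mu$ and $\mu'$ are probability measures and $f$ has finite second moments under each, the scalar Cauchy--Schwarz inequality gives $\E^\mu\|f\| \le (\E^\mu\|f\|^2)^{1/2} < \infty$ and similarly for $\mu'$, so $f$ is Bochner integrable against both measures and $\E^\mu f, \E^{\mu'} f$ are well-defined elements of $E$. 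The starting point is then the algebraic identity
\[ \E^\mu f - \E^{\mu'} f = \int_X f\,(p - p')\,\de\nu = \int_X f\,\bigl(\sqrt p - \sqrt{p'}\bigr)\,\bigl(\sqrt p + \sqrt{p'}\bigr)\,\de\nu. \]

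Next I would move the norm of $E$ inside the integral via the triangle inequality for the Bochner integral, $\bigl\|\int g\,\de\nu\bigr\| \le \int \|g\|\,\de\nu$, and then apply the scalar Cauchy--Schwarz inequality to the pointwise product of $\bigl|\sqrt p - \sqrt{p'}\bigr|$ and $\|f\|\,\bigl(\sqrt p + \sqrt{p'}\bigr)$, yielding
\[ \bigl\|\E^\mu f - \E^{\mu'} f\bigr\| \le \left(\int_X \bigl(\sqrt p - \sqrt{p'}\bigr)^2\,\de\nu\right)^{1/2}\left(\int_X \|f\|^2\,\bigl(\sqrt p + \sqrt{p'}\bigr)^2\,\de\nu\right)^{1/2}. \]
By the definition of the Hellinger distance recalled in the introduction, the first factor equals $\sqrt 2\,d_H(\mu,\mu')$; here one should note the standard fact that this quantity does not depend on the choice of the common dominating measure $\nu$ (one may always take $\nu = \mu + \mu'$ if a concrete choice is wanted). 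For the second factor I would use $\bigl(\sqrt p + \sqrt{p'}\bigr)^2 \le 2(p + p')$ to obtain
\[ \int_X \|f\|^2\,\bigl(\sqrt p + \sqrt{p'}\bigr)^2\,\de\nu \le 2\int_X \|f\|^2\,(p + p')\,\de\nu = 2\bigl(\E^\mu\|f\|^2 + \E^{\mu'}\|f\|^2\bigr). \]

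Combining the two bounds gives $\bigl\|\E^\mu f - \E^{\mu'} f\bigr\| \le \sqrt 2\,d_H(\mu,\mu')\cdot\sqrt{2\bigl(\E^\mu\|f\|^2 + \E^{\mu'}\|f\|^2\bigr)} = 2\,d_H(\mu,\mu')\sqrt{\E^\mu\|f\|^2 + \E^{\mu'}\|f\|^2}$, which is exactly the assertion. I do not expect any genuine obstacle: the argument is elementary once the factorization $p - p' = (\sqrt p - \sqrt{p'})(\sqrt p + \sqrt{p'})$ is in place, and the only points that deserve a word of justification are the Bochner integrability of $f$ (from the second-moment hypothesis together with the finiteness of the total masses) and the reference-measure independence of $d_H$. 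Since the statement is quoted verbatim as part of Lemma 7.14 in \cite{dashti2013bayesian}, one could alternatively simply cite that source; I would nonetheless include the short argument above for completeness.
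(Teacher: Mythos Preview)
Your argument is correct and is precisely the standard proof of this inequality; the paper itself does not supply a proof but merely cites \cite{dashti2013bayesian}, so there is nothing to compare against beyond noting that your derivation is the one found in that reference.
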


Recall that

\[d_H(\mu,\nu)^2 = {1 - \int \sqrt{\frac{\de\mu}{\de\mu_0}(u)}\sqrt{\frac{\de\nu}{\de\mu_0}(u)}\de\mu_0(u)}.\]
Thus, with the results of the preceding sections,
\begin{equation}\label{eq:exprdH}
\begin{split}
d_H(\mu,\nu)^2 &= 1 - \frac{\int \exp(-\frac{\Phi(u) + T\Phi(u)}{2})\de\mu_0(u)}{\sqrt{\int \exp(-\Phi(u))\de\mu_0(u)}\sqrt{\int \exp(-T\Phi(u))\de\mu_0(u)}}\\
&= 1 - \frac{\left\langle \exp\left(-\frac{1}{2}\Phi \right), \exp\left(-\frac{1}{2}T\Phi \right)\right\rangle_{L^2(X,\mu_0)}}{\left\|\exp\left(-\frac{1}{2}\Phi \right)\right\|_{L^2(X,\mu_0)}\cdot \left\|\exp\left(-\frac{1}{2}T\Phi \right)\right\|_{L^2(X,\mu_0)}}
\end{split}
\end{equation}

\begin{rem}
From positivity of the exponential and the Cauchy-Schwarz-Bunyakowsky inequality it can easily be seen that $d_H(\mu,\nu)\in [0,1]$.
\end{rem}
We would like to bound the Hellinger distance, i.e. optimally we would like to prove something like $d_H(\mu,\nu) \leq \epsilon$ for some $\epsilon > 0$. This amounts to a reverse Cauchy-Schwarz inequality, or a statement of the kind
\begin{equation}\label{eq:revCSI}
\langle e^{-\Phi/2}, e^{-(T\Phi)/2} \rangle_{L^2(X,\mu_0)} > (1-\eps^2) \cdot  \|e^{-\Phi/2}\|_{L^2(X,\mu_0)} \cdot \|e^{-(T\Phi)/2}\|_{L^2(X,\mu_0)}.
\end{equation}
In the next section, we present a few elementary results about this kind of inequality.
\section{A reverse Cauchy-Schwarz inequality}\label{sec:reverse}
In this section, $H$ will always be a Hilbert space with inner product $\langle\cdot\rangle$ and norm $\|\cdot\|$. The study of of reverse Cauchy-Schwarz-Bunyakowsky inequalities is an active field of research by itself. We refer to \cite{dragomir2015reverses} for further reading, although we will need only a very basic form of a reverse CSB inequality.

\begin{lemma}\label{lem:aux1}
Let $f,g\in H$ and $D>0$ with $\|f-g\|^2 \leq D\cdot (\|f\|^2 + \|g\|^2)$. Then
\begin{equation}\label{eq:reverseYoung}
\langle f, g \rangle \geq \frac{1-D}{2}\cdot (\|f\|^2+\|g\|^2)\geq (1-D)\cdot \|f\|\cdot \|g\|.
\end{equation} 
\end{lemma}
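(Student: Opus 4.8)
The plan is to prove the two-sided inequality in \eqref{eq:reverseYoung} by expanding the hypothesis and then applying elementary arithmetic-mean/geometric-mean reasoning. The first inequality is the substantive one; the second is an immediate consequence of $\|f\|^2 + \|g\|^2 \geq 2\|f\|\,\|g\|$ together with $1 - D$ being treated as a (possibly negative) scalar multiplier — note that if $1 - D \le 0$ the second inequality holds trivially since the right-hand side is nonpositive while $\langle f,g\rangle$ may be anything, so strictly speaking one wants $D \in (0,1)$ for the chain to be meaningful, but as a formal inequality it survives because multiplying $\|f\|^2+\|g\|^2 \ge 2\|f\|\|g\|$ by the nonnegative-or-nonpositive number $(1-D)/2$ reverses or preserves direction consistently with what is claimed only when $1-D\ge 0$; I would simply remark that the interesting regime is $D<1$ and the statement is vacuous otherwise.

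For the first inequality, the key algebraic identity is the polarization-type expansion
\begin{equation*}
\|f - g\|^2 = \|f\|^2 - 2\langle f, g\rangle + \|g\|^2.
\end{equation*}
Substituting this into the hypothesis $\|f-g\|^2 \le D\,(\|f\|^2 + \|g\|^2)$ gives
\begin{equation*}
\|f\|^2 - 2\langle f,g\rangle + \|g\|^2 \le D\,(\|f\|^2 + \|g\|^2),
\end{equation*}
and rearranging for $\langle f, g\rangle$ yields
\begin{equation*}
2\langle f,g\rangle \ge (1-D)\,(\|f\|^2 + \|g\|^2),
\end{equation*}
which is exactly the first inequality after dividing by $2$. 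So this half is a one-line rearrangement once the expansion is written down.

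For the second inequality, I would note $\|f\|^2 + \|g\|^2 \ge 2\|f\|\,\|g\|$ (the AM–GM inequality, equivalently $(\|f\|-\|g\|)^2 \ge 0$), so that when $1 - D \ge 0$ we may multiply through by $(1-D)/2 \ge 0$ to obtain $\frac{1-D}{2}(\|f\|^2+\|g\|^2) \ge (1-D)\|f\|\,\|g\|$, and chain this with the first inequality. I expect there is essentially no obstacle here: the only subtlety worth flagging is the sign of $1-D$, which is why the lemma is stated (and will be applied) with the tacit understanding $D \in (0,1)$; the degenerate case $D \ge 1$ makes the bound trivial or vacuous. The real work of the paper lies in verifying the hypothesis $\|f-g\|^2 \le D(\|f\|^2+\|g\|^2)$ with $f = e^{-\Phi/2}$, $g = e^{-T\Phi/2}$ in $L^2(X,\mu_0)$ and a good constant $D$, which is exactly the content of Claim~\ref{lem:main2} and the surrounding discussion; this lemma itself is purely formal.
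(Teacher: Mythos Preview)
Your proof is correct and essentially identical to the paper's: both expand $\|f-g\|^2 = \|f\|^2 - 2\langle f,g\rangle + \|g\|^2$ and rearrange for the first inequality, then invoke AM--GM (what the paper calls ``Young's inequality in $\R$'') for the second. Your remark that the second inequality in \eqref{eq:reverseYoung} genuinely needs $D\le 1$ is a valid observation that the paper glosses over; since the lemma is only ever applied with $D<1$ this is harmless, but it is worth flagging.
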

\begin{proof}
\begin{align*}
\langle f, g \rangle &- \frac{1-D}{2}\cdot \|f\|^2 - \frac{1-D}{2}\cdot \|g\|^2 \\
&= \frac{1}{2} \|f\|^2 + \frac{1}{2} \|g\|^2 - \frac{1}{2} \|f-g\|^2- \frac{1-D}{2}\cdot \|f\|^2 - \frac{1-D}{2}\cdot \|g\|^2\\
&= \frac{1}{2}\left[D\cdot (\|f\|^2+\|g\|^2)-\|f-g\|^2\right] \geq 0.
\end{align*}
The last inequality in \eqref{eq:reverseYoung} is just Young's inequality in $\R$.
\end{proof}
\begin{rem} Lemma \ref{lem:aux1} can be thought of as a reverse Young's inequality (which gives for all $f,g\in H$ that $\langle f, g\rangle \leq \frac{1}{2}\|f\|^2 + \frac{1}{2}\|f\|^2 $ and thus if the conditions on $f,g$ as in lemma \ref{lem:aux1} are fulfilled, we can bound
\[\frac{1-D}{2}\cdot (\|f\|^2+\|g\|^2) \leq \langle f, g\rangle \leq \frac{1}{2}\cdot (\|f\|^2+\|g\|^2). \] 
\end{rem}
\begin{lemma}\label{lem:aux2}
Let $f,g\in H$ and $K \in (0,1)$ with $\|f-g\| \leq K\cdot \|f\|$. Then
\[ \langle f,g\rangle \geq \left[\frac{1-K}{1 + (1-K)^2}\right]\cdot \left(\|f\|^2+\|g\|^2\right) \geq 2 \cdot \left[\frac{1-K}{1 + (1-K)^2}\right] \cdot \|f\|\cdot \|g\|.\]
\end{lemma}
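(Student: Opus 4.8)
The natural approach is to reduce Lemma~\ref{lem:aux2} to Lemma~\ref{lem:aux1} by producing a suitable constant $D$ for which the hypothesis $\|f-g\|^2 \le D\cdot(\|f\|^2+\|g\|^2)$ holds. We are given $\|f-g\|\le K\|f\|$, so $\|f-g\|^2 \le K^2\|f\|^2$; the only thing missing is to replace the $\|f\|^2$ on the right by a multiple of $\|f\|^2+\|g\|^2$. This is exactly what the reverse triangle inequality provides.

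\textbf{Key steps.} First I would observe that $\|g\| \ge \|f\| - \|f-g\| \ge (1-K)\|f\|$, so that $\|g\|^2 \ge (1-K)^2\|f\|^2$ and hence
\[
\|f\|^2 + \|g\|^2 \ge \bigl(1 + (1-K)^2\bigr)\,\|f\|^2,
\qquad\text{i.e.}\qquad
\|f\|^2 \le \frac{\|f\|^2+\|g\|^2}{1+(1-K)^2}.
\]
Combining this with $\|f-g\|^2 \le K^2\|f\|^2$ gives
\[
\|f-g\|^2 \le \frac{K^2}{1+(1-K)^2}\,\bigl(\|f\|^2+\|g\|^2\bigr),
\]
so Lemma~\ref{lem:aux1} applies with $D = \frac{K^2}{1+(1-K)^2}$, which indeed satisfies $D>0$ (in fact $D < K^2 < 1$ since $K\in(0,1)$). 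It yields $\langle f,g\rangle \ge \frac{1-D}{2}(\|f\|^2+\|g\|^2)$.

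\textbf{Finishing.} It then remains to simplify the constant: a one-line computation gives
\[
1 - D = \frac{1 + (1-K)^2 - K^2}{1+(1-K)^2} = \frac{2(1-K)}{1+(1-K)^2},
\]
so $\frac{1-D}{2} = \frac{1-K}{1+(1-K)^2}$, which is the claimed first inequality. The second inequality is immediate from Young's inequality $\|f\|^2+\|g\|^2 \ge 2\|f\|\,\|g\|$, already used at the end of Lemma~\ref{lem:aux1}. There is no real obstacle here; the only point requiring a moment's care is recognizing that the reverse triangle inequality is what converts the one-sided bound $\|f-g\|\le K\|f\|$ into the symmetric bound needed to invoke Lemma~\ref{lem:aux1}, and then keeping track of the algebraic identity for $1-D$.
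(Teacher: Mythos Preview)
Your proof is correct and follows essentially the same approach as the paper: both use the reverse triangle inequality $\|g\|\ge\|f\|-\|f-g\|$ to obtain a bound of the form $\|f-g\|^2\le D(\|f\|^2+\|g\|^2)$ and then invoke Lemma~\ref{lem:aux1} with the same constant $D=\frac{K^2}{1+(1-K)^2}$ (the paper writes it as $D=1-\frac{2(1-K)}{1+(1-K)^2}$, which is the same thing). Your presentation is, if anything, a bit more direct than the paper's chain of fraction manipulations.
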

\begin{proof}
\begin{align*}
\frac{\|f-g\|^2}{\|f\|^2+\|g\|^2}&\leq \frac{\|f-g\|^2}{\|f\|^2 + (\|f\|-\|f-g\|)^2}= \frac{\|f-g\|^2}{2\|f\|^2 - 2\|f\|\|f-g\| + \|f-g\|^2}\\
&\leq \frac{\|f-g\|^2}{2\|f\|^2 - 2K\|f\|^2 + \|f-g\|^2}= 1 - \frac{(2-2K)\|f\|^2}{(2-2K)\|f\|^2 + \|f-g\|^2}\\
&\leq 1 - \frac{2(1-K)}{1 + (1-K)^2}
\end{align*}
And using lemma \ref{lem:aux1} (with $D = 1 - \frac{2(1-K)}{1 + (1-K)^2}$), we obtain the result.
\end{proof}
\begin{rem}
This is another form of a reverse Young's inequality, but with a different prerequisite: If $\|f-g\|\leq K\cdot \|f\|$, we have
\begin{equation}
\frac{1}{2}\cdot \left(1 - \frac{K^2}{1 + (1-K)^2}\right) \cdot (\|f\|^2 + \|g\|^2) \leq \langle f,g\rangle \leq \frac{1}{2}\cdot (\|f\|^2+\|g\|^2)
\end{equation}
\end{rem}

\section{Conditions for good approximation}
From lemma \ref{lem:aux2} and the expression for $\de_H(\mu, \nu)$ in \eqref{eq:exprdH} (the Hellinger distance between the posterior measure $\mu$ and its Laplace approximation $\nu$) we immediately obtain the following:
\begin{proposition}\label{prop:main2}
With the assumptions of proposition \ref{prop:main1}, if for some $K\in (0,1)$
\begin{equation}\label{eq:condmain2}
\begin{split}
\left\|\exp\left(-\frac{\Phi(u)}{2}\right) - \exp\left(-\frac{T\Phi(u)}{2}\right)\right\|_{L^2(X, \mu_0)} &\leq K\cdot \frac{\exp(-I(\uMAP))}{\sqrt[4]{\det(C_0^{1/2}\cdot \H I(\uMAP)\cdot C_0^{1/2})}}\\
&= K\cdot \frac{\exp(-I(\uMAP))}{\sqrt[4]{\det(\operatorname{Id} + C_0^{1/2}\cdot H\Phi (\uMAP)\cdot C_0^{1/2})}}
\end{split}
\end{equation}
then 
\begin{equation}
d_H(\mu,\nu) \leq \frac{K}{\sqrt{1 + (1-K)^2}}
\end{equation}
\end{proposition}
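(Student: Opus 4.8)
The plan is to combine the explicit formula for the Hellinger distance in \eqref{eq:exprdH} with the reverse Cauchy--Schwarz estimate of Lemma \ref{lem:aux2}, applied in the Hilbert space $H = L^2(X,\mu_0)$. Set $f = \exp(-\Phi/2)$ and $g = \exp(-T\Phi/2)$, both viewed as elements of $L^2(X,\mu_0)$ (they are in $L^2$ since $\Phi \geq 0$ makes $f$ bounded, and $g$ is an unnormalised Gaussian density ratio whose square-integrability is exactly the content of the condition $\langle C_0^{1/2}\H\Phi(\uMAP)C_0^{1/2}u,u\rangle > -\langle u,u\rangle$, equivalently $\H I(\uMAP) > 0$, already implicit in Proposition \ref{prop:main1}). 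The first step is to rewrite the hypothesis \eqref{eq:condmain2}: by \eqref{eq:normalization} we have $\|g\|_{L^2(X,\mu_0)}^2 = \int \exp(-T\Phi)\,\de\mu_0 = e^{-I(\uMAP)}/\sqrt{\det(C_0^{1/2}\H I(\uMAP)C_0^{1/2})}$, so the right-hand side of \eqref{eq:condmain2} is precisely $K\cdot\|g\|_{L^2(X,\mu_0)}$. Hence the hypothesis says exactly $\|f - g\| \leq K\|g\|$ in $H$.

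Next I would apply Lemma \ref{lem:aux2}, but with the roles of $f$ and $g$ swapped relative to how the lemma is stated: the lemma assumes $\|f-g\| \leq K\|f\|$ and concludes $\langle f,g\rangle \geq 2\left[\frac{1-K}{1+(1-K)^2}\right]\|f\|\,\|g\|$; since the conclusion and the inner product are symmetric in $f$ and $g$, the same bound holds under $\|f-g\| \leq K\|g\|$. This gives
\begin{equation*}
\frac{\langle f, g\rangle_{L^2(X,\mu_0)}}{\|f\|_{L^2(X,\mu_0)}\,\|g\|_{L^2(X,\mu_0)}} \geq \frac{2(1-K)}{1 + (1-K)^2}.
\end{equation*}
Plugging this into \eqref{eq:exprdH} yields
\begin{equation*}
d_H(\mu,\nu)^2 \leq 1 - \frac{2(1-K)}{1+(1-K)^2} = \frac{1 + (1-K)^2 - 2(1-K)}{1+(1-K)^2} = \frac{(1-(1-K))^2}{1+(1-K)^2} = \frac{K^2}{1+(1-K)^2},
\end{equation*}
and taking square roots gives the claimed bound $d_H(\mu,\nu) \leq K/\sqrt{1+(1-K)^2}$.

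The only genuinely delicate point — and the one I would be most careful to state cleanly — is the identification of the normalising constant in the second form of \eqref{eq:condmain2}, namely that $\det(C_0^{1/2}\H I(\uMAP)C_0^{1/2}) = \det(\operatorname{Id} + C_0^{1/2}\H\Phi(\uMAP)C_0^{1/2})$; this is immediate from $\H I(\uMAP) = \H\Phi(\uMAP) + C_0^{-1}$, so that $C_0^{1/2}\H I(\uMAP)C_0^{1/2} = C_0^{1/2}\H\Phi(\uMAP)C_0^{1/2} + \operatorname{Id}$, but one should note that this determinant is the Fredholm determinant and is finite precisely because $C_0^{1/2}\H\Phi(\uMAP)C_0^{1/2}$ is trace-class (a standing regularity assumption needed for \eqref{eq:DPZ} to make sense). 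Everything else is a direct substitution; there is no further obstacle, since the hard analytic work — the characteristic-function computation behind \eqref{eq:normalization}, and the reverse-CSB inequality itself — has already been carried out in the preceding sections. One should also remark that the hypothesis $K \in (0,1)$ is exactly what is needed both for Lemma \ref{lem:aux2} to apply and for the resulting bound $K/\sqrt{1+(1-K)^2}$ to lie in $(0,1)$, so the conclusion is non-trivial.
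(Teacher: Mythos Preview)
Your proof is correct and follows the same route as the paper: identify the right-hand side of \eqref{eq:condmain2} as $K$ times the $L^2(X,\mu_0)$-norm of $\exp(-T\Phi/2)$ via \eqref{eq:normalization}, apply Lemma~\ref{lem:aux2}, and substitute into \eqref{eq:exprdH}. The only cosmetic difference is that the paper assigns $f=\exp(-T\Phi/2)$, $g=\exp(-\Phi/2)$ from the outset (so Lemma~\ref{lem:aux2} applies directly without the swap you note), and your write-up supplies more of the intermediate algebra and the trace-class remark on the Fredholm determinant, which the paper leaves implicit.
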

\begin{proof}
We use lemma \ref{lem:aux2}, where $H = L^2(X,\mu_0)$, $f=\exp(-T\Phi/2)$ and $g = \exp(-\Phi/2)$.  Note that $\|f\|_H^2 = \exp(-I(\uMAP))/\sqrt{\det(C_0^{1/2}\cdot \H I(\uMAP) \cdot C_0^{1/2})}$ due to \eqref{eq:normalization}. Then we can set $1-\eps^2 = 2  \left[\frac{1-K}{1 + (1-K)^2}\right]$, hence $\eps = \frac{K}{\sqrt{1 + (1-K)^2}}$ in equation \eqref{eq:revCSI}.

The equality in condition  \eqref{eq:condmain2} follows from $I(u) = \Phi(u) + \frac{1}{2}\|u\|_{C_0}$.

\end{proof}

\begin{rem}
Note that 
\begin{equation}
\begin{split}C_0^{1/2}H\Phi(u)C_0^{1/2}[h_1,h_2] &= \langle DG(u)(C_0^{1/2}h_1), \Gamma^{-1}DG(u)(C_0^{1/2}h_2)\rangle \\
&- \langle HG(u)[C_0^{1/2}h_1, C_0^{1/2}h_2], \Gamma^{-1}(y-G(u))\rangle\end{split}\end{equation}
where $DG$ is the Frechet derivative of the forward operator of $G$ and $HG$ is its second Frechet derivative (the Hessian).
\end{rem}

The following corollary uses assumptions which can be checked more easily but is less strict.
\begin{corollary}\label{cor:useful}
Define 
\[ K^2:= \frac{\sqrt{\det(C_0^{1/2}\H I(\uMAP)C_0^{1/2})}}{\exp(-I(\uMAP))}\cdot \int \exp (-\min\{\Phi(u),T\Phi(u)\})\cdot \min\left\{\frac{|\Phi(u)-T\Phi(u)|^2}{4},1\right\}\mu_0(\de u). \]
Then we have
\[d_H(\mu,\nu) \leq \frac{K}{\sqrt{1 + (1-K)^2}}.\]
\end{corollary}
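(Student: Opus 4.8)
The plan is to reduce Corollary \ref{cor:useful} to Proposition \ref{prop:main2} by producing an upper bound for the $L^2(X,\mu_0)$-norm appearing in condition \eqref{eq:condmain2} that matches the definition of $K$ given in the corollary. Concretely, I would start from
\[
\left\|\exp\!\left(-\tfrac{\Phi}{2}\right) - \exp\!\left(-\tfrac{T\Phi}{2}\right)\right\|_{L^2(X,\mu_0)}^2 = \int \left(e^{-\Phi(u)/2} - e^{-T\Phi(u)/2}\right)^2 \de\mu_0(u),
\]
and bound the integrand pointwise. Writing $a = \min\{\Phi(u),T\Phi(u)\}$ and $b = \max\{\Phi(u),T\Phi(u)\}$, one has $(e^{-\Phi/2}-e^{-T\Phi/2})^2 = e^{-a}(1 - e^{-(b-a)/2})^2$, and since $|\Phi(u)-T\Phi(u)| = b-a$, the elementary inequality $(1-e^{-t/2})^2 \leq \min\{t^2/4,\,1\}$ for $t\geq 0$ gives
\[
\left(e^{-\Phi(u)/2} - e^{-T\Phi(u)/2}\right)^2 \leq e^{-\min\{\Phi(u),T\Phi(u)\}}\cdot \min\!\left\{\frac{|\Phi(u)-T\Phi(u)|^2}{4},\,1\right\}.
\]
Integrating against $\mu_0$ shows that the left-hand side of \eqref{eq:condmain2}, squared, is at most the integral appearing in the corollary's definition of $K^2$.

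Next I would plug in the definition of $K$ from the corollary. By construction,
\[
K^2 = \frac{\sqrt{\det(C_0^{1/2}\H I(\uMAP)C_0^{1/2})}}{\exp(-I(\uMAP))}\cdot \int e^{-\min\{\Phi,T\Phi\}}\min\!\left\{\tfrac{|\Phi-T\Phi|^2}{4},1\right\}\de\mu_0,
\]
so the pointwise bound above yields exactly
\[
\left\|\exp\!\left(-\tfrac{\Phi}{2}\right) - \exp\!\left(-\tfrac{T\Phi}{2}\right)\right\|_{L^2(X,\mu_0)}^2 \leq K^2\cdot \frac{\exp(-I(\uMAP))}{\sqrt{\det(C_0^{1/2}\H I(\uMAP)C_0^{1/2})}},
\]
which is precisely the square of the hypothesis \eqref{eq:condmain2} of Proposition \ref{prop:main2}, using \eqref{eq:normalization} to identify the normalizing constant $\|e^{-T\Phi/2}\|_{L^2}^2 = \exp(-I(\uMAP))/\sqrt{\det(C_0^{1/2}\H I(\uMAP)C_0^{1/2})}$. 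Taking square roots, condition \eqref{eq:condmain2} holds with this $K$, and Proposition \ref{prop:main2} immediately gives $d_H(\mu,\nu) \leq K/\sqrt{1+(1-K)^2}$, as claimed. (One should note in passing that the definition of $K^2$ makes sense as stated only when it falls in $(0,1)$; otherwise the bound is vacuous since $d_H \leq 1$ always, so the statement is still formally true.)

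The only genuinely substantive step is the scalar inequality $(1-e^{-t/2})^2 \leq \min\{t^2/4, 1\}$ on $[0,\infty)$: the bound by $1$ is trivial since $0 \leq 1-e^{-t/2} < 1$, and the bound by $t^2/4$ follows from $1 - e^{-t/2} \leq t/2$, which is the standard estimate $1-e^{-x}\leq x$ at $x = t/2$. So there is really no obstacle here; the work is entirely in recognizing that the min-structure in the corollary's $K^2$ is exactly what this pointwise estimate produces, and in being careful that the roles of $f$ and $g$ in Lemma \ref{lem:aux2} (with $f = e^{-T\Phi/2}$, so that $\|f\|^2$ is the explicitly computable Gaussian integral) are consistent with how Proposition \ref{prop:main2} was set up. A small remark worth including is that the corollary trades the tight constant of Proposition \ref{prop:main2} for an expression in which the difference $\Phi - T\Phi$ enters only through an explicit, monotone, bounded function, making it more amenable to estimation when one has pointwise control on the Taylor remainder $R = \Phi - T\Phi$.
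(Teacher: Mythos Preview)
Your proof is correct and follows essentially the same route as the paper: both use the pointwise estimate $(e^{-\Phi/2}-e^{-T\Phi/2})^2 \leq e^{-\min\{\Phi,T\Phi\}}\min\{|\Phi-T\Phi|^2/4,1\}$ (the paper phrases it as $|e^{-x}-e^{-y}|\leq e^{-(x\wedge y)}(|x-y|\wedge 1)$ applied at $x=\Phi/2$, $y=T\Phi/2$) and then invoke Proposition \ref{prop:main2} via the identification \eqref{eq:normalization}. Your write-up is in fact more explicit about why the scalar inequality holds and about the requirement $K\in(0,1)$, which the paper leaves implicit.
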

\begin{proof}
This is due to the elementary inequality $|e^{-x}-e^{-y}|\leq e^{-(x\wedge y)}\cdot (|x-y|\wedge 1)$ from which we obtain
\begin{align*}
\int |e^{-\Phi(u)/2}-e^{-T\Phi(u)/2}|^2\de\mu_0 &\leq \int e^{-(\Phi(u)\wedge T\Phi(u))} \cdot (|\Phi(u)-T\Phi(u)|^2/4 \wedge 1)\de\mu_0\\
&=K^2 \cdot \int e^{-T\Phi(u)}\de\mu_0.
\end{align*}
\end{proof}
\section{A caveat on application}

We check on a one-dimensional example whether the bounds on the Hellinger distance obtained are tight. We consider a one-dimensional example by setting $G:\R\to\R$, with $G(u) = \exp(u)$. This means that $\Phi(u) = \frac{|y-e^u|^2}{2}$.

We consider two cases: $y=-2$ and $y=2$. In the first case, the Laplace approximation is not very good ($d_H(\mu,\nu) \approx 0.3260$) whereas in the second case, there is only a slight misfit between posterior measure and its Laplace approximation ($d_H(\mu, \nu)\approx 0.0958 $).

\begin{figure}[hbtp]
\centering
\includegraphics[width=0.8\textwidth]{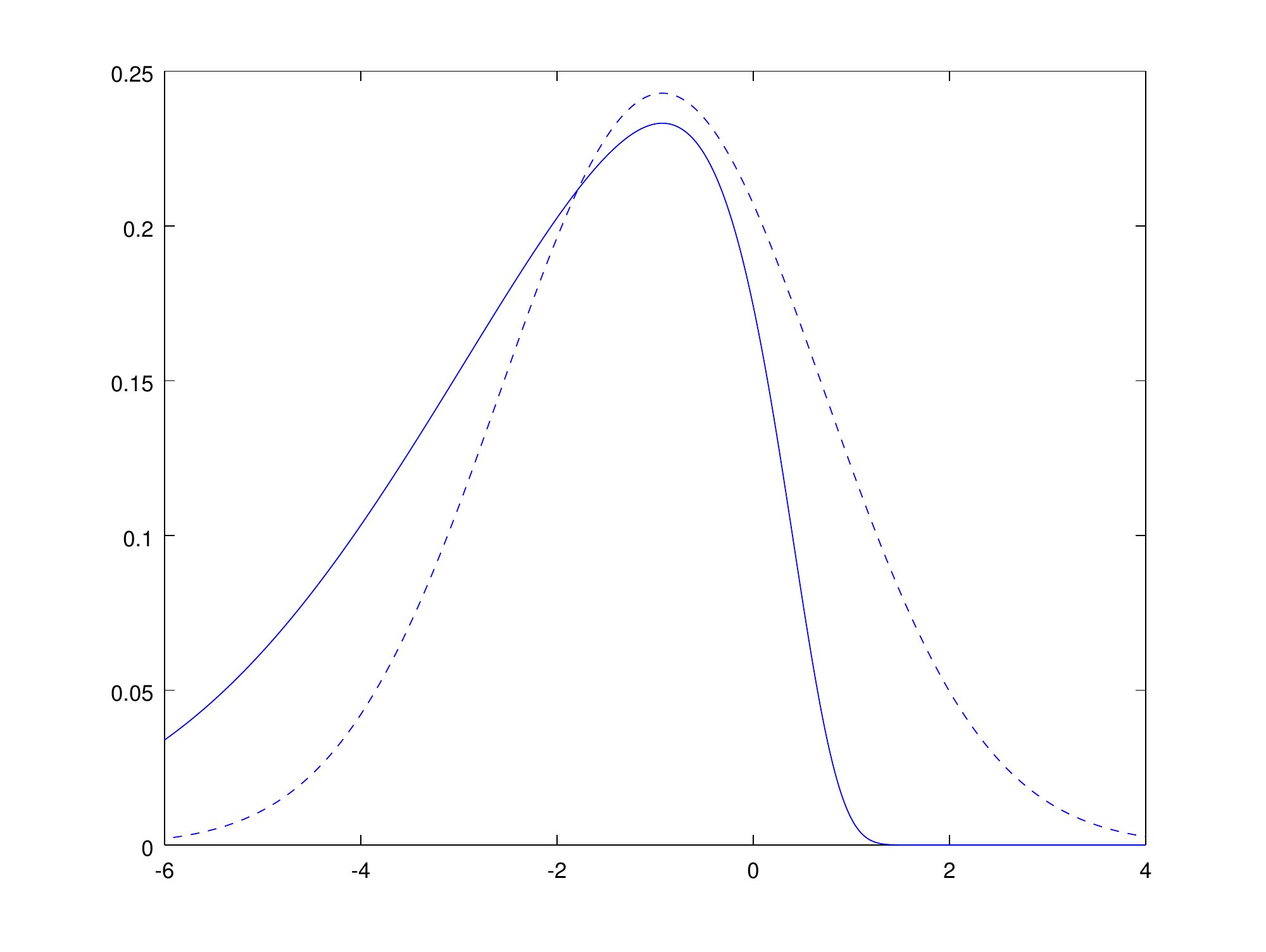}
\includegraphics[width=0.8\textwidth]{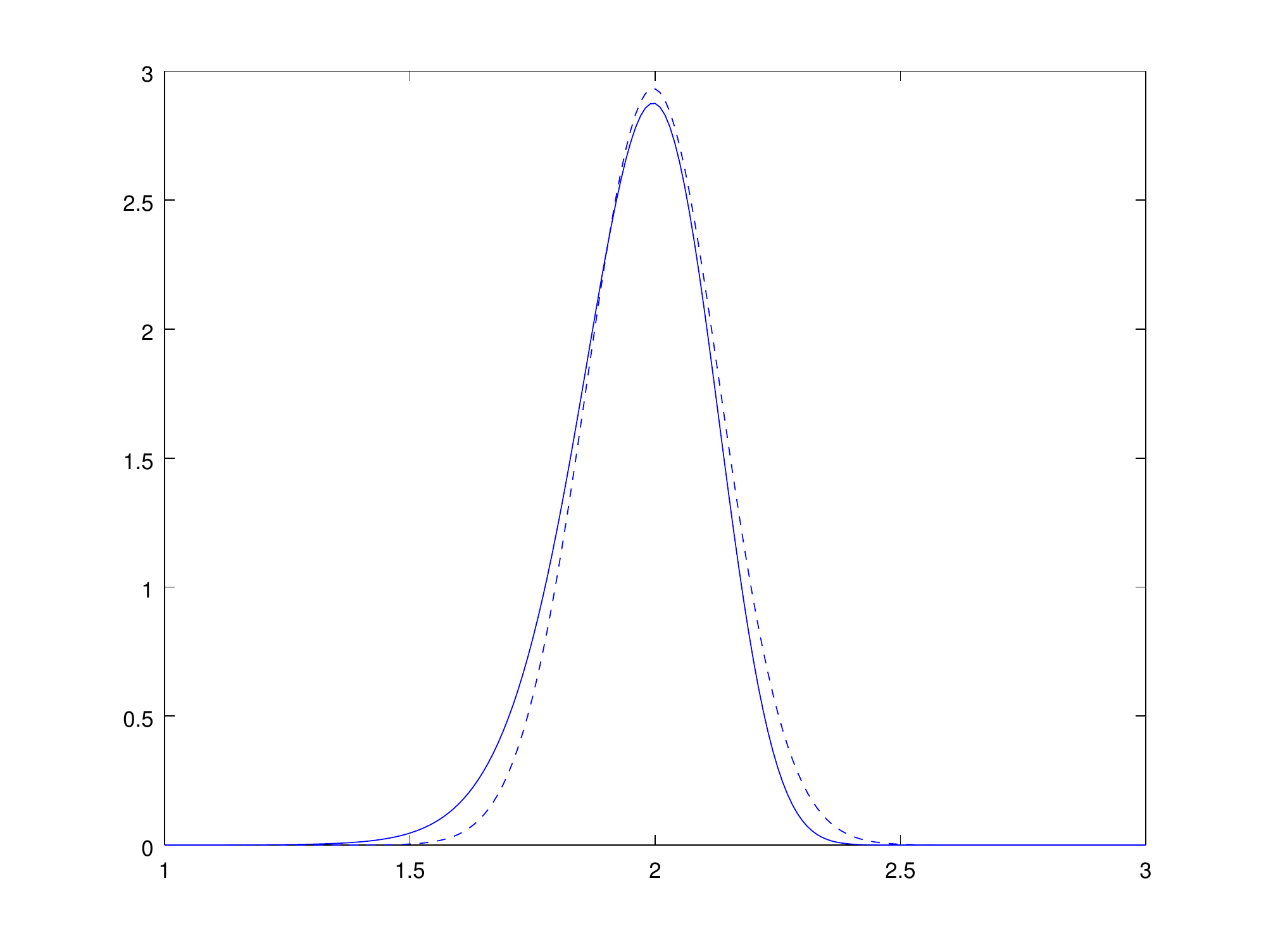}
\caption{Densities of the posterior measure $\mu$ (solid) and its Laplace approximation $\nu$ (dashed) w.r.t. the Lebesgue measure for $y = 2$ (top) and $y=-2$ (bottom)}
\end{figure}



\paragraph{The case $y=-2$.}
Recall that the actual value for the Hellinger distance is $d_H(\mu,\nu) \approx 0.32595$
\begin{enumerate}
\item The $K$ from proposition \ref{prop:main2} is $K\approx 0.46621$, hence $d_H(\mu,\nu)\leq 0.41128$.
\item Explicit calculation of $K^2$ in corollary \ref{cor:useful} yields $K\approx 0.55328$, so $d_H(\mu, \nu)\leq 0.50517$.
\end{enumerate}
 
\paragraph{The case $y=2$.}
Recall that the actual value for the Hellinger distance is $d_H(\mu,\nu) \approx 0.095810$
\begin{enumerate}
\item The $K$ from proposition \ref{prop:main2} is $K\approx 0.13648$, hence $d_H(\mu,\nu)\leq 0.10330$.
\item Explicit calculation of $K^2$ in corollary \ref{cor:useful} yields $K\approx 0.17422$, so $d_H(\mu, \nu)\leq 0.13434$.
\end{enumerate} 
In conclusion, the bounds are not strict even in the one-dimensional setting (which is obvious, given the nature of inequalities we have used in lemmata \ref{lem:aux1}, \ref{lem:aux2}, but they give a good idea of the magnitude of the Hellinger distance nonetheless.

The calculation of the necessary quantities was done by explicit integration in order to check for strictness of the bound. This is unfeasible in higher dimensions which are of much higher interest (in one dimension there is no reason to even use a Laplace approximation instead of the fully non-Gaussian posterior). So we turn our attention to ways of obtaining results by other methods. We will see that this is difficult even in one dimension: Corollary \ref{cor:useful} demands calculation of the integral 
\[K^2 = \frac{\sqrt{\det(C_0^{1/2}\H I(\uMAP)C_0^{1/2})}}{\exp(-I(\uMAP))}\cdot \int \exp (-\min\{\Phi(u),T\Phi(u)\})\cdot \min\left\{\frac{|\Phi(u)-T\Phi(u)|^2}{4},1\right\}\mu_0(\de u).\]
As a simpler example, consider the integral
\[\int_{-\infty}^\infty x^2 \exp(-|x|) N(0,1)(\de x).\]
An intuitive approach is to split the real line in subsets $A = [-\eps,\eps]$, $B = [-\log(1/\eps), -\eps] \cup [\eps ,\log(1/\eps)]$, $C = \{|u|> \log(1/\eps)\}$. Then the quadratic term is small on $A$ and the exponential term is small on $C$. The set $B$ makes a lot more trouble as neither the quadratic nor the exponential part is consistently small and taking the maximum $x^2 \exp(-|x|)|_{x=2}$ as a bound for the function on $B$ leads to a suboptimal result. It is the same with the formula for $K$: For $u$ near $\uMAP$, the error between $\Phi$ and $T\Phi$ is small, while for large $u$, both the misfit $\Phi$ and the quadratic $T\Phi$ become large, so the exponential term gets small very quickly. The interface between those two domains though is hard to handle and taking an analytical bound ony any term in this interface yields a highly suboptimal bound. This makes an explicit calculation of the integral almost impossible (save taking a large number of subsets tesselating the domain, but this amounts to explicit numerical integration in the limit). This is even more true in higher (and infinite dimensions), so the practical use of this formula is very limited.

This problem makes it necessary to apply MCMC sampling to obtain the value $K$, but in this case we could have used to definition of $d_H$ in the first place to get the exact Hellinger distance. 

\section*{Acknowledgments} The author thanks Serge Kräutle for help with reverse CSB inequalities as well as Carlo Beenakker for a helpful discussion on MathOverflow.

\bibliographystyle{siam}
\bibliography{lit}

\end{document}